\documentclass{amsart}
\usepackage{amsfonts}
\usepackage{amsmath,amssymb}
\usepackage{amsthm}
\usepackage{amscd}
\usepackage{graphics}
\usepackage{graphicx}

\theoremstyle{remark}{

\newtheorem{Rem}{{\rm Remark}}
\newtheorem{Prob}{{\rm Problem}}

}
\theoremstyle{plain}
{

\newtheorem{Thm}{Theorem}

}

\begin{document}
\title[Morse functions fibered by spheres, tori, or Klein Bottles]{Morse functions with regular level sets consisting of $2$-dimensional spheres, $2$-dimensional tori, or Klein Bottles}
\author{Naoki kitazawa}
\keywords{Smooth functions. Morse functions. Reeb (di)graphs. Fundamental surface theory and $3$-dimensional one. \\
\indent {\it \textup{2020} Mathematics Subject Classification}: Primary~57R45, 58C05. Secondary~57R19.}
\address{Osaka Central Advanced Mathematical Institute (OCAMI) \\
3-3-138 Sugimoto, Sumiyoshi-ku Osaka 558-8585
TEL: +81-6-6605-3103}
\email{naokikitazawa.formath@gmail.com}
\urladdr{https://naokikitazawa.github.io/NaokiKitazawa.html}
\maketitle
\begin{abstract}
In this paper, we study Morse functions with regular level sets consisting of spheres, tori, or Klein Bottles on $3$-dimensional closed manifolds.

We characterize $3$-dimensional manifolds represented by connected sums each of whose summands is the product $S^1 \times S^2$ of the circle $S^1$ and the sphere $S^2$, lens spaces, or non-orientable closed and connected manifolds of genus $1$ by a certain subclass of such Morse functions. This is a kind of extensions of the orientable case, by Saeki, in 2006. This is a variant of its extension by the author for $3$-dimensional orientable manifolds represented by connected sums each of whose summands is the product $S^1 \times S^2$, lens spaces, or torus bundles over $S^1$ by a certain class of Morse-Bott functions.
We also classify Morse functions with given regular level sets consisting of $S^2$, $S^1 \times S^1$, or Klein Bottles in a certain sense, generalizing some previous work by the author.

\end{abstract}
%【REVISE】 combinatoric ～ is → combinatorial object. It is .
%【REVISE】  such that a point is a vertex if and only if the corresponding connected component of the level set contains some singular points → whose vertex set is the set of all points containing some singular points in the corresponding connected component of the level set .
%【REVISE】 We delete "extending the result before".
\section{Introduction.}
\label{sec:1}
{\it Morse} functions have been fundamental and strong tools in investigating the manifolds as objects in differential topology, geometric topology, and various geometry and mathematics.
Critical points of Morse functions appear discretely and they have information on homology groups of the manifolds, decomposition of the manifolds into disks ({\it handles}), and information of homotopy of the manifolds.
%This is a fundamental theorem in theory of Morse functions. See 
See \cite{milnor1, milnor2} for this fundamental theory. This is extended to and applied in infinite dimensional situations and in \cite{milnor1}, classical theory is presented, and see also related study \cite{palais} for example.  In low dimensional geometry (differential topology or geometric topology), certain diagrams (such as {\it Kirby diagrams} in low dimensional manifolds), based on critical points of Morse functions and handles are important.

In this paper, we emphasize fundamental philosophy that Morse functions are not only tools, but also important objects in various geometry. Our study is also regarded as a topic from singularity theory of differentiable maps and applications to differential topology and geometric topology of manifolds.

Note that some of the present exposition is presented based on a slide for a presentation and a report of the author \cite{kitazawa3, kitazawa4}, in "Mathematical Science of Knots VIII", a conference on knot theory and related mathematics. In the present paper, we use some terminologies and notions such as {\it critical} points (, the {\it critical set} and the {\it critical value set}) of a real-valued smooth function $c:X \rightarrow \mathbb{R}$, a {\it Morse} function, {\it singular} points (, the {\it singular set} and the {\it singular value set}) of a smooth map $c:X \rightarrow Y$ between smooth manifolds, and a graph with related notions, with no precise exposition. For a smooth real-valued function $c:X \rightarrow \mathbb{R}$, a preimage $f^{-1}{(r)}$ is a {\it level set} and it is {\it regular} if it contains no critical point (of the function $c$).
 We use $D^k \subset {\mathbb{R}}^k$ for the $k$-dimensional (unit) disk in the $k$-dimensional Euclidean space ${\mathbb{R}}^k$ and the boundary is the ($k-1$)-dimensional (unit) sphere $\partial D^k=S^{k-1}$. A connected sum of manifolds can be the sphere $S^m$, where there is no connected summand. We use $K^2$ for the Klein Bottle. 
For systematic understanding of $3$-dimensional manifold theory, refer to \cite{hempel} for example.

In differential topology and geometric topology, characterizations of certain manifolds by the existence of Morse functions of certain classes are important, as Reeb's sphere theorem implies (see \cite{reeb} and see also \cite{milnor2} again). Theorems of this type have been presented in the development of global singularity theory of differentiable maps and related differential topology and geometric topology, mainly due to Saeki: \cite{saeki1, saeki2} are of pioneering related studies. For classical studies on higher dimensional versions of Morse functions, \cite{thom, whitney} are important, and as another related study, studies on existence of {\it fold} maps into ${\mathbb{R}}^n$, higher dimensional versions of Morse functions, via theory of differential equations and so-called homotopy principle, are important and known as celebrated theory by Eliashberg (\cite{eliashberg1, eliashberg2}).

\begin{Thm}[\cite{saeki4}]
\label{thm:1}
A $3$-dimensional closed, connected, and orientable manifold $M$ admits a Morse function $f:M \rightarrow \mathbb{R}$ such that regular level sets $f^{-1}(r)$ consist of surfaces diffeomorphic to $S^2$ or $S^1 \times S^1$ if and only if $M$ is diffeomorphic to a connected sum each of whose summand is $S^1 \times S^2$ or a so-called {\rm lens space}.
\end{Thm}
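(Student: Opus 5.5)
Both directions go through the Reeb graph $W_f$ of $f$ and a decomposition of $M$ along regular level sets. For the ``only if'' direction, take $f$ as in Theorem~\ref{thm:1}. We may assume, after a small perturbation that does not change the diffeomorphism types of regular level sets, that each critical value has exactly one critical point. Each vertex $v$ of $W_f$ then has a closed neighborhood $N_v$, the preimage of a small interval about $f(v)$ restricted to the relevant component. Up to diffeomorphism, $N_v$ is a compact $3$-manifold obtained from a collar of its lower boundary by attaching one handle.

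The first step is a local classification of these pieces. Their boundary components are spheres or tori, and the handle index determines the type:
\begin{itemize}
\item index $0$ or $3$ gives $D^3$;
\item index $1$ or $2$ turns the boundary configuration into one of the following:
\begin{itemize}
\item $S^2 \sqcup S^2 \leftrightarrow S^2$, giving a pair of pants times nothing, i.e. $S^3$ minus three balls;
\item $S^2 \leftrightarrow S^1\times S^1$, giving $S^1\times D^2$ minus a ball;
\item $S^2 \sqcup S^1\times S^1 \leftrightarrow S^1 \times S^1$, giving $(S^1\times S^1)\times[0,1]$ minus a ball.
\end{itemize}
\end{itemize}
The Euler characteristic shows that an index $1$ or $2$ handle changes the total genus by exactly one or changes the number of components. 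Since all genera are at most $1$, only these cases occur; note that $T^2 \to$ genus $2$ is forbidden. The edges of $W_f$ then correspond to products $S^2\times[0,1]$ or $T^2\times[0,1]$.

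Next, cap off every sphere boundary with balls. The manifold $M$ is then a connected sum of pieces obtained as follows.
\begin{itemize}
\item Cut $M$ along all regular sphere level sets (one per sphere edge of $W_f$). Each sphere either separates or not; each non-separating one contributes an $S^1\times S^2$ summand, corresponding to a cycle in $W_f$ through sphere edges.
\item The remaining components, after capping, are unions of solid tori and $T^2\times[0,1]$ pieces glued along tori.
\item Collapse each maximal torus-edge subgraph. It is a path or a cycle, because each torus vertex has exactly two torus ends (types $S^2\leftrightarrow T^2$ end a chain).
\item A path with two solid-torus ends gives a union of two solid tori, i.e. a lens space, $S^1\times S^2$, or $S^3$.
\item A cycle of torus pieces gives a $T^2$-bundle over $S^1$.
\end{itemize}

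The main obstacle is ruling out that last case. A cycle made only of torus-type vertices needs vertices of type $S^2\sqcup T^2\leftrightarrow T^2$, and those pieces carry sphere edges hanging off the cycle. One must check that the torus chain through such vertices cannot close up into a loop. The plan is to use that $f$ restricted to the torus chain is monotone in the direction of increasing torus height along the chain. Indeed, at a vertex of type $S^2\sqcup T^2\leftrightarrow T^2$ the torus enters on one side and leaves on the other, so $f$ strictly increases or decreases along a torus chain and it cannot be a cycle. This monotonicity argument is where I expect the real work; note that it fails for Morse--Bott functions, which is consistent with the torus bundles appearing in the author's extension.

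For the ``if'' direction, construct functions explicitly:
\begin{itemize}
\item the standard height function on $S^3$, and a function on a lens space $L(p,q)=V_1\cup V_2$ with one minimum and an index-$1$ point inside each solid torus and the torus level in between;
\item a function on $S^1\times S^2$ obtained similarly, or from the genus-one Heegaard splitting;
\item for connected sums, remove balls around a minimum of one function and a maximum of another and glue after shifting values. The new regular level sets are spheres merged by an index-$1$ or $2$ point via connected sum with a sphere, hence diffeomorphic to the old ones.
\end{itemize}
This connected-sum step is routine.
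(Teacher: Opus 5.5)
\noindent Your proof is correct. The paper does not prove Theorem~\ref{thm:1} itself, since it is quoted from Saeki. Its proof of Theorem~\ref{thm:4} does contain Theorem~\ref{thm:1} as the case without $K^2$ edges, and there the local part matches yours: the pieces over vertices of the Reeb digraph are analysed via handles, and the function is reduced to a simple Morse function.

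The global step is different. The paper further deforms the simple Morse function by the local moves of FIGURE~\ref{fig:5} at vertices carrying two torus edges. It then reads off a connected sum of domains of the basic path functions of Step~1 and of sphere-fibered functions, citing Saeki's results for the latter. You leave the function alone instead. You cut along one sphere level per sphere edge and cap off. You note that the torus-edge subgraph has maximal degree $2$. You exclude torus cycles because at each vertex of type $S^2\sqcup T^2\leftrightarrow T^2$ one torus edge lies below and one above, so $f$ would increase strictly around a cycle. The remaining chains are then genus-one Heegaard splittings.

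Your route is self-contained. It also shows exactly why torus bundles cannot occur for Morse functions yet appear in Theorem~\ref{thm:2}: a critical torus at a local extremum has both torus edges on the same side. The paper's route instead yields normal forms of the functions themselves, which it reuses for the realization statement of Theorem~\ref{thm:6}. The monotonicity argument, as you wrote it, is already complete; it is not where the real work lies.

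\noindent Two points should be tightened.

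First, perturbing to a simple Morse function does change the regular level sets, since new ones appear between separated critical values. A careless ordering can create genus-$2$ levels. For example, on a torus component, an index-$2$ point along an essential circle and an index-$1$ point at the same value give $T^2$ below and $T^2$ above, but doing the $1$-handle first yields genus $2$. You must order, at each critical level, all index-$2$ points before the index-$1$ points, as the paper does in Case 2-3. Then circle surgeries never raise the genus of a component, and the subsequent $1$-handles only build up towards the upper level. Hence every intermediate level still consists of spheres and tori.

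Second, ``each non-separating sphere contributes an $S^1\times S^2$'' is not the right count. The number of such summands is the first Betti number of the dual graph of the cut, which equals $b_1(W_f)$ because the torus chains are paths.

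A smaller remark on the connected-sum construction: gluing along the boundary spheres of the removed balls creates no new critical point. The new levels are simply disjoint unions of old level components.
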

For this, fundamental $3$-manifold theory and the well-known correspondence between critical points of Morse functions and handles are important.  
A {\it Morse-Bott} function is a kind of generalizations of a Morse function. See \cite{banyagahurtubise} and see also \cite{bott}. Theorem \ref{thm:2} is a generalization of Theorem \ref{thm:1}.
\begin{Thm}[\cite{kitazawa8}]
\label{thm:2}
A $3$-dimensional closed, connected, and orientable manifold $M$ admits a Morse-Bott function $f:M \rightarrow \mathbb{R}$ such that regular level sets $f^{-1}(r)$ consist of surfaces diffeomorphic to $S^2$ or $S^1 \times S^1$and the following are satisfied if and only if $M$ is diffeomorphic to a connected sum each of whose summands is $S^1 \times S^2$, a lens space, or a so-called {\rm torus bundle} over $S^1$.
\begin{itemize}
\item Around each critical point where $f$ has no local extremum, $f$ is Morse.
\item The critical set of $f$ is diffeomorphic to a disjoint union of manifolds each of which is a single point, or diffeomorphic to the circle $S^1$, the sphere $S^2$, the torus $S^1 \times S^1$, or the real projective plane ${\mathbb{R}P}^2$.
\end{itemize}
\end{Thm}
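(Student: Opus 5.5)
Both directions rest on two facts: the correspondence between critical points of a Morse--Bott function and (generalized) handles, and the classification of closed $3$-manifolds built by gluing pieces whose boundaries are spheres and tori.

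\emph{Sufficiency.} Here we construct $f$ explicitly. For $S^1\times S^2$ and lens spaces, Theorem \ref{thm:1} already gives Morse functions whose regular level sets are spheres or tori. For a torus bundle $T_A$ over $S^1$ with monodromy $A$, we cut along a fiber torus. This writes $T_A$ as a union of two copies of $T^2\times[0,1]$, or better as a union of a mapping-cylinder-type piece and its mirror. Concretely, take the function given by projection to the circle and fold it: compose with the height function $S^1\to[-1,1]$. This gives a Morse--Bott function whose critical set is two fiber tori, both of index extremal type. Its regular level sets are pairs of tori, and every critical point is a local extremum.

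To realize connected sums, we use the standard trick. Take a Morse function on each summand and join them by removing small balls around local minima and maxima, adding index-$1$/index-$2$ saddle pairs. The new regular level sets arising near a connected-sum sphere are spheres or disjoint unions of spheres with the old level components, so the condition on level sets is preserved. The critical set stays of the allowed types, and non-extremal critical points are Morse.

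\emph{Necessity.} Let $f$ satisfy the conditions. Near a non-extremal critical point $f$ is Morse. Near an extremal critical component $C$, the Morse--Bott lemma shows the neighbourhood is a disk bundle over $C$ whose boundary sphere bundle is a regular level component. That component is $S^2$ or $T^2$, and $C$ is one of a point, $S^1$, $S^2$, $T^2$, or $\mathbb{R}P^2$. This yields neighbourhoods diffeomorphic to $D^3$, $S^1\times D^2$, $S^2\times[-1,1]$, $T^2\times[-1,1]$, the twisted $I$-bundle over the Klein bottle (excluded, since $K^2$ is not allowed as $C$), or the twisted $I$-bundle over $\mathbb{R}P^2$, which is $\mathbb{R}P^3$ minus a ball. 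The $2$-dimensional extremal components $S^2$ and $T^2$ with trivial normal bundle look like interior regular pieces. Removing them by a local modification, they become points where the function is not extremal but the level set is merely a product. In other words, we may regard these regions as product cobordisms that glue level sets of one side to the other, at the cost of changing the monodromy.

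Next decompose $M$ along a regular level component. Consider the Reeb graph. If it contains a cycle, cutting along a level set in the cycle gives a nonseparating sphere or torus. A nonseparating sphere splits off an $S^1\times S^2$ summand. Otherwise, we cut along all separating spheres coming from level sets, reducing via connected-sum decomposition to pieces whose level components are all tori away from balls. For each such piece, the Morse critical points between torus levels are saddles of index $1$ or $2$ adjacent only to $S^2$/$T^2$ levels. By the argument in the proof of Theorem \ref{thm:1}, each piece is a union of solid tori, $T^2\times I$ pieces, and $\mathbb{R}P^3$ minus balls, glued along tori. A Reeb-graph component containing a cycle through torus levels only gives a torus bundle. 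A tree-like component gives a union of two solid tori, i.e.\ a lens space or $S^1\times S^2$ (with $\mathbb{R}P^3$ counted among lens spaces).

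The main obstacle is this last step: controlling the Morse saddles adjacent to torus levels. One must show that such a saddle between a torus level and sphere levels is equivalent to attaching a compressing $2$-handle, so that no genuinely new piece (such as a Seifert piece) arises. I would handle this as Saeki does, by showing that saddles changing genus go from $T^2$ to $S^2$ via a compression. The torus bundle then appears only in the graph cycle with no compressions.
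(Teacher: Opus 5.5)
The sufficiency direction is right in substance: the folded bundle projection $T_A\to S^1\to\mathbb{R}$ is the correct model. Two details need fixing, though.

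\begin{itemize}
\item These torus-bundle functions are not Morse and have no point extrema. Before forming connected sums you must first create a sphere level, e.g.\ by a birth of an index-$2$/index-$3$ pair near a torus level. Connecting tubes must then be attached only at sphere levels, since a $1$-handle between two torus levels creates a genus-$2$ level.
\item No local modification of $f$ makes a critical torus or sphere regular, because both boundary components of its neighbourhood lie on the same level. You do not need this: keep $N(C)\cong T^2\times[-1,1]$ as a gluing piece with two torus boundary components. That piece is exactly where torus bundles come from.
\end{itemize}

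The genuine gap is in the necessity direction, at the step you flag yourself. You treat each saddle vertex of the Reeb graph of the given $f$ as an elementary cobordism, so that after cutting along level spheres the torus-glued pieces form paths and cycles. But $f$ need not be simple: one singular level component may contain several Morse points. Then the claims ``a cycle through torus levels only gives a torus bundle'' and ``a tree-like component gives two solid tori'' are false.

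Here is a counterexample. Let $C$ be a critical torus minimum with $N(C)=T^2\times[-1,1]$ and $f=t^2$. Let both tori $T^2\times\{\pm1\}$ run into one singular level containing an index-$2$ point on each (compressing along essential curves) and an index-$1$ point joining them. Cap the resulting sphere by a maximum. All hypotheses hold, the only level spheres bound a ball, and the Reeb graph has a cycle consisting of two torus edges. Yet $M\cong L\#(S^1\times S^2)$, where $L$ is the union of the two solid tori bounded by the compressed tori, and this is never a torus bundle. Likewise a single vertex can be of type $T^2\sqcup T^2\to T^2$, so your ``trees'' can branch. Saying that genus-changing saddles are compressions $T^2\to S^2$ only describes \emph{simple} saddles, so deferring to Saeki does not close this gap.

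The missing idea is the one used in Case 2-3 of the proof of Theorem \ref{thm:4}; the paper itself only quotes the present theorem.
\begin{itemize}
\item The descending disks of the critical points in one singular level meet the lower level disjointly. So you may perturb $f$ to put all index-$2$ points below all index-$1$ points.
\item Compressions of spheres and tori give spheres and tori, and $1$-handles never decrease genus. Hence every intermediate level is still a union of spheres and tori.
\item Every saddle is then one of the six elementary types, and the hidden spheres become level spheres.
\item Cut along all level spheres. This contributes as many $S^1\times S^2$ summands as the first Betti number of the sphere-gluing graph. What remains are closed manifolds assembled from pieces with at most two torus boundary components each.
\item Apart from $S^3$ and $\mathbb{R}P^3$ pieces, these are paths, giving lens spaces, $S^3$ or $S^1\times S^2$, or cycles of $T^2\times I$, giving torus bundles.
\end{itemize}
With this step added, your outline goes through.
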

For this, {\it Reeb} ({\t di}){\it graphs} of Morse-Bott functions with data on regular level sets and deformations are important. We present {\it Reeb} ({\it di}){\it graphs} of smooth functions (\cite{reeb}). For a smooth function $c:X \rightarrow \mathbb{R}$ on a manifold $X$ with no boundary, the quotient space $R_c:=X/{\sim}_c$ is defined by the following equivalence relation ${\sim}_c$ on $X$. For two points $x_1,x_2 \in X$, $x_1 {\sim}_c x_2$ if and only if they are in a same connected component of a same level set $c^{-1}(y)$. This is the {\it Reeb space} of $c$. It has the structure of a graph in specific cases. We can define the quotient space $q_c:X \rightarrow R_c$ and the unique continuous function $\bar{c}:R_c \rightarrow \mathbb{R}$ with $c=\bar{c} \circ q_c$.
For the Morse(-Bott) function case, see \cite{izar} (resp. \cite{martinezalfaromezasarmientooliveira}). For a general situation, see \cite{saeki5, saeki6} (\cite[Theorem 3.1]{saeki5}) and more rigorously $R_c$ is a graph whose vertex set consists of all points $v$ such that ${q_c}^{-1}(v)$ has some critical points of $c$. We make $R_c$ a digraph by giving the orientation of each edge by $\bar{c}$ with the rule that an edge $e$, incident to exactly two vertices $v_{e,1}$ and $v_{e,2}$, is oriented as an edge departing from $v_{e,1}$ and entering $v_{e,2}$ if and only if $\bar{c}(v_{e,1})<\bar{c}(v_{e,2})$. We can define a digraph as a pair $(G,c_G)$ of a finite and connected graph $G$ and a continuous map $c_G$ which is injective on each edge, oriented according to the rule above by using $c_G$. We omit $c_G$ for this unless we need. We can define {\it isomorphisms between digraphs} as isomorphisms of graphs preserving the orders of the functions.
For digraphs, a {\it sink} ({\it source}) means a vertex which every edge incident to it enters (from which every edge incident to it departs). Theorem \ref{thm:3} is a kind of fundamental propositions. 
\begin{Thm}
\label{thm:3}
For a Morse function on a closed and connected manifold $c$, $R_c$ is a finite and connected digraph whose source and sinks are of degree $1$. We also have a pair $(R_c,\{{q_f}^{-1}(p_e)\}_{e \in E_{R_c}})$, where $E_{R_c}$ denotes the edge set, with $p_e$ being a point in {\rm (}the interior of{\rm )} an edge $e \in E_{R_c}$. We call this the {\rm Reeb data} of $c$.
\end{Thm}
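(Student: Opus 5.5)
We plan to prove Theorem \ref{thm:3} directly from the Morse lemma and elementary properties of proper submersions. Let $c:X \rightarrow \mathbb{R}$ be a Morse function on a closed and connected manifold $X$ of dimension $m \geq 1$. Since $X$ is compact, the critical set of $c$ is finite, and so is the critical value set. We first show that $R_c$ is a finite graph whose vertices are the points $v$ with ${q_c}^{-1}(v)$ containing critical points. Fix consecutive critical values $t_0<t_1$. Over the open interval $(t_0,t_1)$, $c$ is a proper submersion, so by Ehresmann's fibration theorem $c^{-1}((t_0,t_1))$ is diffeomorphic to $F \times (t_0,t_1)$ with $F$ a closed manifold having finitely many components. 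Hence $q_c(c^{-1}((t_0,t_1)))$ is a finite disjoint union of open arcs, and $\bar{c}$ is a homeomorphism from each arc onto $(t_0,t_1)$. Each level set $c^{-1}(t)$ is compact, so it has finitely many components; those containing critical points give finitely many vertices. The same is true over the unbounded intervals below the minimum or above the maximum, except that these are empty.

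We then check that each open arc has two distinct endpoints which are vertices. By the Morse lemma and the properness of $c$, the closure of a component $C \times (t_0,t_1)$ meets $c^{-1}(t_0)$ and $c^{-1}(t_1)$ in connected sets. Letting $t \to t_0$ and $t \to t_1$, the points of $C \times \{t\}$ accumulate onto a single component of $c^{-1}(t_0)$ and a single component of $c^{-1}(t_1)$. These components must contain critical points, since otherwise the product structure would extend across the level. This gives the edges. Since $\bar{c}$ takes distinct values $t_0<t_1$ at the two ends, no edge is a loop, and the edge is oriented by $\bar{c}$ as in the definition. Injectivity of $\bar{c}$ on each edge is also clear. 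Connectedness of $R_c$ follows because $q_c$ is a continuous surjection from the connected space $X$. Finiteness follows because there are finitely many components over each of the finitely many intervals. We would also verify that $R_c$ carries the quotient topology agreeing with the graph topology. This uses the local product structure together with the Morse lemma neighbourhoods, and it is the step requiring the most care.

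Next we consider sources and sinks. Let $v$ be a sink, that is, a vertex with no edge departing from it. Then near the component ${q_c}^{-1}(v)$, all nearby points have $c \leq \bar{c}(v)$. So every critical point $p$ in ${q_c}^{-1}(v)$ is a local maximum, since a saddle or minimum of index $<m$ has points with larger value nearby. By the Morse lemma, a local maximum $p$ has a neighbourhood in which the level set through $p$ is the single point $\{p\}$, so $\{p\}$ is open in its level set. It is also closed, and therefore ${q_c}^{-1}(v)=\{p\}$. Using the Morse chart, $c^{-1}((\bar{c}(v)-\epsilon,\bar{c}(v)])$ near $p$ is an $m$-disk whose regular levels are connected spheres $S^{m-1}$ when $m \geq 2$. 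So exactly one edge enters $v$, and $v$ has degree $1$. The case of sources is symmetric, using local minima and applying the argument to $-c$.

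For $m=1$, the manifold $X$ is a circle, and the same conclusion holds with levels $S^0$. In this case we check degree $1$ directly: near a maximum, the nearby level consists of two points lying on different sides of $p$. So we should confirm how the graph is defined in this case. We expect the main obstacle to be this $m=1$ subtlety, together with the topological identification of the quotient with a graph. We would handle the latter by citing \cite{izar} and \cite[Theorem 3.1]{saeki5}. Finally, the Reeb data is simply a definition, because the diffeomorphism type of ${q_c}^{-1}(p_e)$ is independent of the choice of $p_e$ in $e$ by the product structure above.
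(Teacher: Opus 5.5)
The paper gives no proof of this statement: it is recorded as a standard fact, with pointers to \cite{izar}, \cite{saeki5} and \cite{marzantowiczmichalak, michalak1, michalak2}. A direct argument via Ehresmann's theorem and the Morse lemma is therefore a legitimate route, but two steps of yours are false as written.

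First, you claim the components of $c^{-1}(t_0)$ and $c^{-1}(t_1)$ onto which $C \times (t_0,t_1)$ accumulates ``must contain critical points''. This fails whenever a critical level is disconnected. On $S^2$, take a Morse function with one minimum, one saddle, and two maxima $p_2,p_3$ with $c(p_2)<c(p_3)$. The level $c^{-1}(c(p_2))$ is $\{p_2\}$ together with a circle containing no critical point. On the branch leading to $p_3$, the arc lying between the saddle value and $c(p_2)$ ends on that circle. The product structure does extend across the circle, but that is no contradiction. It only means the image of the circle is an interior point of an edge, not a vertex, exactly as the paper's definition of the vertex set prescribes. So your arcs are not the edges of $R_c$. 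You must concatenate them through regular components of critical levels, or equivalently define the edges as closures of the components of $R_c$ minus the vertex set. Since $\bar{c}$ is strictly monotone along such a concatenation and only finitely many arcs occur, you still get finitely many edges, no loops, and injectivity of $\bar{c}$ on each edge. You also get constancy of the diffeomorphism type of ${q_c}^{-1}(p_e)$ along the whole edge, which your final remark on the Reeb data needs. Incidentally, connectedness of the limit set comes from its being a nested intersection of the compact connected sets $\overline{C \times (t_0,t_0+\epsilon)}$; the Morse lemma plays no role there.

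Second, your treatment of $m=1$ is wrong, and your own computation shows it. On $S^1$ every component of a level set is a point, so $q_c$ is a homeomorphism and $R_c$ is a circle. Every local extremum is then a sink or source of degree $2$. The statement is false for $m=1$ and must be read with $\dim X \geq 2$. This is the paper's implicit convention: edges at sinks and sources carry $S^{m-1}$, and only $m=2,3$ are used. You should exclude this case rather than assert that ``the same conclusion holds''.

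For $m \geq 2$ your sink argument is sound. The set $\{p\}$ is open and closed in its level, and near $p$ the set $c^{-1}((\bar{c}(v)-\epsilon,\bar{c}(v)))$ is the punctured Morse ball. That ball is connected precisely because $m \geq 2$, which is exactly where the dimension hypothesis enters. With these two corrections, and the quotient-topology identification taken from \cite{izar} or \cite[Theorem 3.1]{saeki5} as you propose, the argument goes through.
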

For this, see also \cite{marzantowiczmichalak, michalak1, michalak2}. Notions (and arguments essentially) presented first by the author, in \cite{kitazawa2}, are important and we explain them. Let $G$ be a finite and connected digraph such that the restriction of $c_G$ to each edge is injective and that sources and sinks are always of degree $1$. We also consider a family $\{F_e\}_{e \in E_G}$ of ($m-1$)-dimensional closed and connected manifolds labeled by the edge set $E_G$ such that for each edge $e_{\rm s}$ incident to some sink or source, $F_{e_{\rm s}}=S^{m-1}$ and call $(G,\{F_{e}\}_{e \in E_G})$ an ({\it $m-1$}){\it -labeled-pre-M-digraph}. If we can have such an object as in Theorem \ref{thm:3} up to isomorphisms, then it is called an ({\it $m-1$}){\it -labeled-M-digraph}. Here, an {\it isomorphism} means an isomorphism of the digraphs mapping each edge $e_1$ to another edge $e_2$ in such a way that $F_{e_1}$ and $F_{e_2}$ are diffeomorphic.

Theorem \ref{thm:4} is one of our main result. Here, a ($3$-dimensional) manifold of {\it degree $g \geq 0$} means a closed manifold obtained by gluing two copies of ${\natural}_{j=1}^g (D^2 \times S^1)$ or ${\natural}_{j=1}^g (S^1 \tilde{\times} D^2)$ along the boundaries, where $g$ cannot be smaller. A {\it lens space} is an orientable manifold with $g=1$ which is not homeomorphic to $S^1 \times S^2$ here. Here, the notation is used in the following way: ${\natural}_{j=1}^l X_j$ is for a so-called {\it boundary sum} of $j$ manifolds $X_j$ and
$S^1 \tilde{\times} D^2$ is for the total space of a non-trivial smooth bundle over the circle $S^1$ whose fiber is the $2$-dimensional disk $D^2$.
\begin{Thm}
\label{thm:4}
\begin{enumerate}
\item A $3$-dimensional closed and connected manifold $M$ admits a Morse function $f:M \rightarrow \mathbb{R}$ with the following if and only if $M$ is diffeomorphic to a connected sum each of whose summand is diffeomorphic to $S^1 \times S^2$, a lens space, or a non-orientable manifold of degree $1$.
\begin{enumerate}
\item \label{thm:4.1} For the Reeb data $(R_c,\{{q_f}^{-1}(p_e)\})$, ${q_f}^{-1}(p_e)$ is diffeomorphic to $S^2$, $S^1 \times S^1$, or $K^2$.
\item \label{thm:4.2} For each vertex $v$, either an edge $e_{{\rm  e},v}$ entering $v$ with ${q_f}^{-1}(p_e)$ diffeomorphic to $K^2$ or an edge $e_{{\rm  d},v}$ departing from $v$ with ${q_f}^{-1}(p_e)$ diffeomorphic to $K^2$, exists.
 \end{enumerate}
\item
A $3$-dimensional closed and connected manifold $M$ whose 2nd Stiefel-Whitney class vanishes admits a Morse function $f:M \rightarrow \mathbb{R}$ satisfying {\rm (}\ref{thm:4.1}{\rm )}, presented above, if and only if $M$ is diffeomorphic to a connected sum each of whose summands is diffeomorphic to $S^1 \times S^2$, a lens space, or a non-orientable manifold of degree 1.
\end{enumerate}
\end{Thm}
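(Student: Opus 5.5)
The plan is to prove both directions of each part by decomposing $M$ along regular level sets into the pieces that lie over small neighbourhoods of the vertices of the Reeb digraph $R_f$, and then analysing these pieces handle by handle.

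\emph{Local analysis.} First we study a single vertex $v$. Since $f$ is Morse, we may perturb it so that each vertex carries exactly one critical point. Then ${q_f}^{-1}$ of a small closed neighbourhood of $v$ is the trace of a single handle attachment. It is a compact connected $3$-manifold whose boundary components are the adjacent regular level sets, each of which is $S^2$, $S^1\times S^1$ or $K^2$.

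Consider an index-$1$ critical point. If it joins two components, two surfaces $F_1,F_2$ become $F_1\sharp F_2$. Because genus and Euler characteristic add, the allowed list forces one of $F_1,F_2$ to be $S^2$. If it does not join components, a $1$-handle is added to a single surface $F$, raising its genus. The result must again be in the list, so $F=S^2$, and the result is $S^1\times S^1$ or $K^2$ depending on whether the handle is orientation preserving.

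Index-$2$ points are handled by applying the same argument to $-f$, and extrema give $D^3$ pieces. The outcome is that each vertex piece is one of the following: a product with a punctured ball, $S^1\times D^2$ or $S^1\tilde{\times} D^2$ with some balls removed, or a pair of pants type piece $S^2\times[0,1]$ minus balls. We then glue the pieces along the edges. The gluing maps are diffeomorphisms of $S^2$, $T^2$ or $K^2$.

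\emph{Only if.} Cut $M$ along all regular level spheres. By the analysis above, capping the spheres by balls produces a connected sum decomposition. Some summands are $S^1\times S^2$, arising from cycles of $R_f$ through sphere edges. The remaining summands are obtained by gluing two solid tori, or two solid Klein bottles, along their boundary. Such a summand is $S^3$, a lens space, $S^1\times S^2$, or a non-orientable manifold of degree $1$. A solid Klein bottle can only be glued to another solid Klein bottle, since $K^2$ bounds only $S^1\tilde{\times}D^2$ in this list. This gives $S^1\tilde{\times}S^2$ or other degree-$1$ non-orientable manifolds.

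\emph{If.} We construct $f$ on each summand: the standard genus-$1$ Heegaard height function on lens spaces and on degree-$1$ non-orientable manifolds, and the obvious function on $S^1\times S^2$. We then combine these along connected sums by merging the Reeb digraphs at sphere edges. To achieve condition (b) of part (1), we must ensure that every vertex is adjacent to a $K^2$ edge. If there is at least one non-orientable summand, its $K^2$ edge is available. We then arrange all connected-sum operations so that they are performed near that edge, for instance by doing the summing index-$1$/$2$ points along a $K^2$ level and replacing sphere edges accordingly. We must check that the sum with a $K^2$ level can be realised without creating other surfaces. Here we use $K^2\sharp T^2\neq$ allowed, which forces the summing to happen through spheres. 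This step is delicate. Conversely, condition (b) must be used in the ``only if'' direction: it forces a $K^2$ edge adjacent to each vertex, and this rules out pieces that would produce other manifolds. I expect to have to reread the precise role of (b). Possibly it is needed exactly to exclude orientable $M$ without non-orientable summands, in which case the first item of the statement concerns non-orientable $M$ only, and the case analysis follows the pattern above.

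\emph{Part (2).} This follows the same decomposition. The vanishing of $w_2$ (together with the analogue in the non-orientable setting, where $w_2=w_1^2$ type constraints appear) is used to exclude exotic pieces arising from gluing $K^2$ boundaries by maps that do not extend. The main obstacle I expect is the classification of the vertex pieces and gluings around $K^2$ edges. This requires knowing the mapping class group of $K^2$ and which of its elements extend over $S^1\tilde{\times}D^2$. It is also where the degree-$1$ list must be shown to be exhausted.
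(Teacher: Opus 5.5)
The gap is in your first step, ``perturb $f$ so that each vertex carries exactly one critical point''. Separating critical points that lie on a common critical level creates new regular levels, and nothing forces these to stay in $\{S^2,S^1\times S^1,K^2\}$. This reduction is exactly where (b), resp.\ $w_2(M)=0$, is needed, and you never use them there.

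Remark~1 shows the failure is real. $\mathbb{R}P^2\times S^1$ carries a Morse function satisfying (a) whose vertex $v_3$ holds an index-$1$ and an index-$2$ point between two $K^2$-edges. Splitting them with the index-$1$ point below produces a connected level with $\chi=-2$. With the index-$2$ point below one gets $\mathbb{R}P^2\sqcup\mathbb{R}P^2$. Any other surgery of $K^2$ would, by your own classification, put $\mathbb{R}P^2\times S^1$ into the list, yet its $\pi_1\cong\mathbb{Z}/2\times\mathbb{Z}$ is not a free product of cyclic groups. So your ``only if'' argument, as written, would prove a false statement.

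What is missing is the controlled reordering of the paper's Cases 2-2 and 2-3. At each vertex, put all index-$2$ points below all index-$1$ points; this is possible since points on one critical level are not joined by gradient lines. The middle level is then obtained by surgery along two-sided circles, both from the entering and from the departing surfaces. Surgery never lowers the Euler characteristic of a component, so every new level consists of $S^2$, $S^1\times S^1$, $K^2$ or $\mathbb{R}P^2$. An $\mathbb{R}P^2$ component, once created, survives all later surgeries, because two-sided circles in $\mathbb{R}P^2$ bound discs. If one side of $v$ has no $K^2$, the middle level is a surgery of spheres and tori, so no $\mathbb{R}P^2$ ever appears.

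Thus (b) must be read as ``no vertex has $K^2$-edges on both sides'', which is how Case 2-3 uses it. Read literally, it fails at every source and sink, since their only edge is an $S^2$-edge. So your plan to make every vertex adjacent to a $K^2$-edge cannot succeed. Restricting (1) to non-orientable $M$ would also contradict the inclusion of $S^3$ and lens spaces. With the right reading, the ``if'' direction is just Step 1: genus-one height functions with levels $S^2$, then $S^1\times S^1$ or $K^2$, then $S^2$, glued along sphere levels. Nothing there is delicate.

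In part (2), $w_2(M)=0$ serves to forbid a two-sided $\mathbb{R}P^2$ in $M$, since $w_2(M)$ would restrict to $w_2(\mathbb{R}P^2)\neq 0$. It has nothing to do with $K^2$-gluing maps. Those are harmless: any union of two solid Klein bottles is of degree $1$ by definition, and is in fact always $S^1\tilde{\times}S^2$.

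Once $f$ is simple with levels in the list, your single-handle classification and your cutting along sphere levels are fine. They agree with the paper's local analysis and its appeal to Saeki for the sphere part.
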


The content of the present paper is as follows. In the next section, we prove Theorem \ref{thm:4}. In the third section, as another work, we discuss classifications of Morse functions on a fixed $m$-dimensional manifold up to isomorphisms of ({\it $m-1$})-labeled-pre-digraphs (Theorem \ref{thm:6}). This is a kind of natural classifications weaker than the classifications up to so-called {\it topological} ({\it $C^{\infty}$}){\it -equivalences}.
\section{Characterizations of $3$-dimensional manifolds of certain classes and one of our main result, Theorem \ref{thm:4}.}

A Morse function is {\it simple} if at distinct critical points of the function the values are always mutually distinct.
Let $f:M \rightarrow N$ be a smooth map from an $m$-dimensional manifold $M$ into an $n$-dimensional one $N$ with no boundary where $m \geq n \geq 1$. $f$ is a {\it fold} map if at each singular point $p$ of $f$, for a suitable local coordinates and the uniquely defined integer $0 \leq i(p) \leq \frac{m-n+1}{2}$, $f(x_1,\cdots x_m)=(x_1 \cdots x_{n-1}, {\Sigma}_{j=1}^{m-n-i(p)+1} {x_{n-1+j}}^2-{\Sigma}_{j=1}^{i(p)} {x_{m-i(p)+j}}^2)$. This is explained as a smooth map locally represented as a projection (with no singular point) or the product map of a Morse function and the identity map on ($n-1$)-dimensional manifold with no boundary. For this see \cite{eliashberg1, eliashberg2, saeki1} for example and for systematic exposition on fundamental notions and arguments on singularity theory of differentiable maps, see \cite{golubitskyguillemin} as one of related well-known textbooks.

We explain the fundamental correspondence between {\it $j$-handles} and critical points of {\it index} $j$ shortly.
We consider two spaces $F_a$ and $F_b$ each of which is an ($m-1$ )-dimensional smooth closed manifold or empty.  
 Let $f_{F_a,F_b}:\tilde{M_{F_a,F_b}} \rightarrow \{t \mid a \leq t \leq b\}$ a Morse function on an $m$-dimensional compact and connected manifold $\tilde{M_{F_a,F_b}} $ with the following.
\begin{itemize}
\item For the boundary $\partial \tilde{M_{F_a,F_b}}$ of $\tilde{M_{F_a,F_b}}$, it holds that $\partial \tilde{M_{F_a,F_b}}=F_a \sqcup F_b$.
\item It has the unique critical value $s$ with $a<s<b$.
\item The level set ${f_{F_a,F_b}}^{-1}(a)$ is $F_a$ and the level set ${f_{F_a,F_b}}^{-1}(b)$ is $F_b$.
\end{itemize}
We explain the topology and the differentiable structure of $\tilde{M_{F_a,F_b}}$. We prepare $F_a \times \{t \mid 0 \leq t \leq 1\}$, where $F_a$ and $F_a \times \{0\}$ are identified by the map mapping $x$ to $(x,0)$. By attaching $j$-handles $D^j \times D^{m-j}$ to $F_a \times \{1\}$ by diffeomorphisms from $\partial (D^j) \times D^{m-j}$ disjointly and eliminating the resulting corner canonically, we have $\tilde{M_{F_a,F_b}}$. Each $j$-handle corresponds to a critical point of {\it index} $j$ of the function, where the {\it index} is defined as an integer $0 \leq j \leq m$ respecting the order of the values of the function.

We prove Theorem  \ref{thm:4} (\ref{thm:4.1}). We respect arguments of the preprints  \cite{kitazawa7, kitazawa8} for this and other new result of us, where we discuss in a self-contained way. It is a new part that we consider the situation with some edge $e$ satisfying $F_e=K^2$.

Hereafter, for example, a so-called {\it path digraph on $l \geq 2$ vertices $v_j$} ($1 \leq j \leq l$) is important. This is also a digraph with the edge set $\{e_{j}\}_{j=1}^{l-1}$ each element $e_j$ of which is oriented as a directed edge departing from $v_j$ and entering $v_{j+1}$. In addition, in our situation, $F_{e_j}=S^2$ for $j=1,l-1$. 

\begin{proof}[A proof of Theorem \ref{thm:4} (\ref{thm:4.1})]
Step 1 A proof of "The "If" part". \\

For $S^3$, we consider a Morse function with exactly two critical points. Its Reeb data is isomorphic to $(\{e_0\},\{R_{e_0}=S^2\})$. For $S^1 \times S^2$, a len space, or a non-orientable manifold of degree $1$, we can consider a simple Morse function whose Reeb data is isomorphic to $(\{e_1,e_2,e_3\},\{R_{e_1}=S^2,R_{e_2}=S^1 \times S^1,K^2, R_{e_3}=S^2\})$ and the digraph is a path digraph on $4$ vertices. See FIGURE \ref{fig:1}.
\begin{figure}
	\includegraphics[width=70mm,height=65mm]{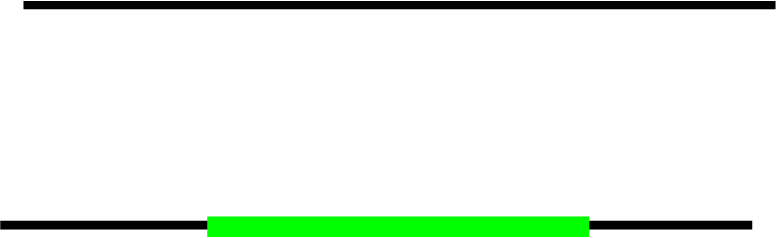}
	\caption{Fundamental Morse functions in STEP 1. Their Reeb data are presented roughly: for the black colored edges $e$, $F_e=S^2$, and for the green colored edge $e$, $F_e=S^1 \times S^1,K^2$.}
	\label{fig:1}
\end{figure}
We can have a desired Morse function by iterations of fundamental operations, presented in FIGURE \ref{fig:2}. 
\begin{figure}
	\includegraphics[width=70mm,height=65mm]{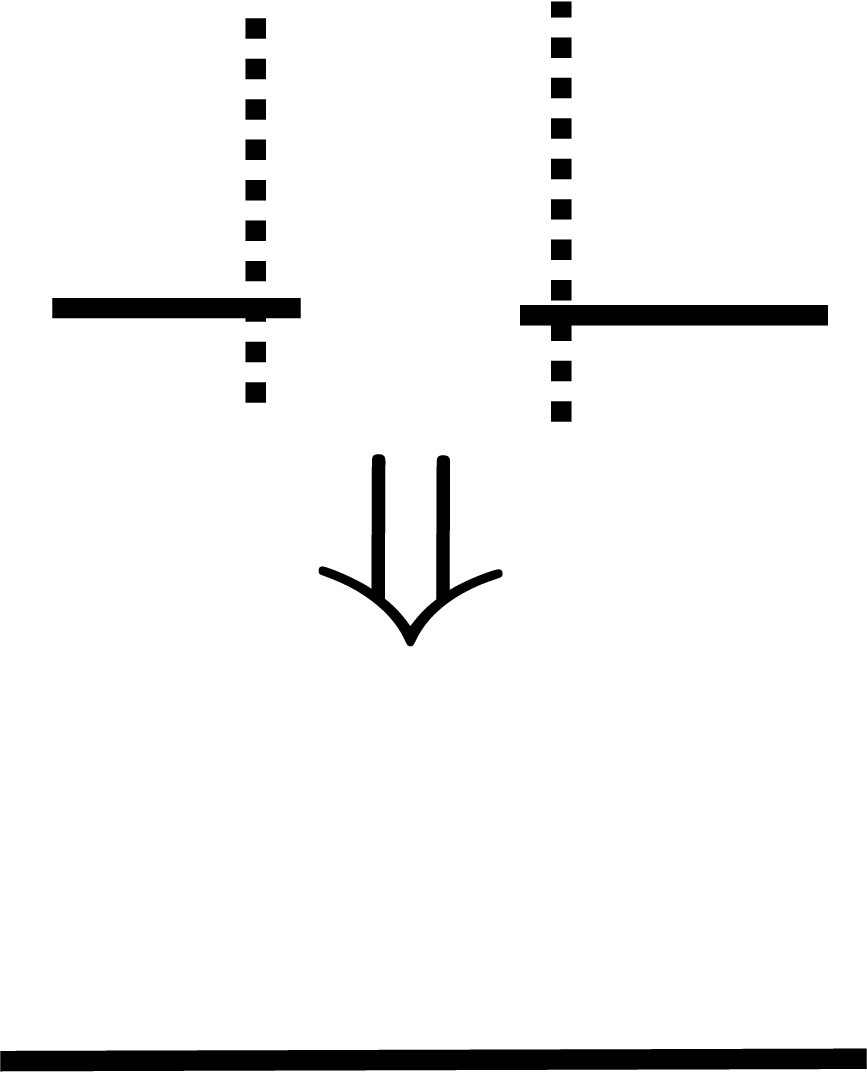}
	\caption{Construction of a new Morse function on a connected sum of given two manifolds, with given Morse functions, in STEP 1, where the Reeb digraphs are presented locally.}
	\label{fig:2}
\end{figure}

\ \\
Step 2 A proof of "The "Only if" part". \\
We first investigate the topology (differentiable structure) of the manifold and the function mapped to a small neighborhood of each vertex of the Reeb digraph. We can also deform the local Morse function to a simple Morse function. After this deformation, we deform the function further and investigate the global topology (differentiable structure) of the manifold. \\
\ \\
Case 2-1 Around a source or a sink $v_1$. \\
Around such a vertex, we have the natural height of the disk $D^3$, represented by the form $f(x)=\pm ({x_1}^2+{x_2}^2+{x_3}^2)+\bar{f}(v_1)$, where the isolated critical point $(x_1,x_2,x_3)=(0,0,0)$ is mapped to $v_1$ by the quotient map $q_f$. \\
\ \\
Case 2-2 Around a vertex $v_2$ of degree $2$ such that for the two edges $e_{v_2,1}$ and $e_{v_2,2}$ incident to $v_2$, $F_{e_{v_2,1}}$ and $F_{e_{v_2,2}}$ are diffeomorphic to $S^2$. \\

By the observation of the handle attachment, we have the following. Around such a vertex $v_2$, we have a $3$-dimensional compact and connected submanifold of $M$ bounded by $F_{e_{v_2,1}} \sqcup F_{e_{v_2,2}}$ and obtained by attaching $l_{v_2}>0$ $2$-handles to $F_{e_{v_2,1}} \times \{1\} \subset F_{e_{v_2,1}} \times \{t \mid 0 \leq t \leq 1\}$ and $l_{v_2}>0$ $1$-handles to $F_{e_{v_2,1}} \times \{1\} \subset F_{e_{v_2,1}} \times \{t \mid 0 \leq t \leq 1\}$ disjointly, where  $F_{e_{v_2,1}}$ is identified with  $F_{e_{v_2,1}} \times \{0\}$ by the map mapping $x$ to $(x,0)$. 
 \begin{figure}
	\includegraphics[width=70mm,height=65mm]{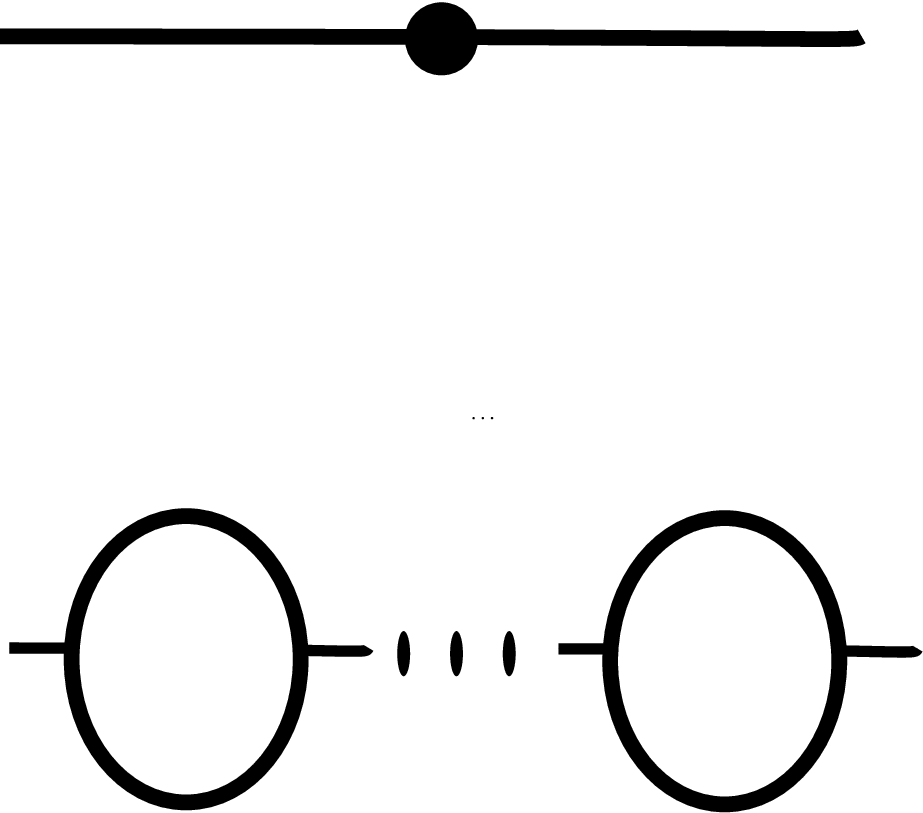}
	\caption{Deforming a function in Case 2-2 to a local simple Morse function in such a way that the 1st Betti number of the Reeb graph increases by $l_{v_2}$ and the Reeb digraphs are presented locally and the regular level sets of the resulting local Morse function are diffeomorphic to $S^2$ or $S^2 \sqcup S^2$.}
	\label{fig:3}
\end{figure}

\ \\
Case 2-3 Around remaining vertices $v$. \\
We consider the family $\{e_{{\rm e},v,j}\}_{j=1}^{l_{\rm e}}$ of all edges entering $v$ and the family $\{e_{{\rm d},v,j}\}_{j=1}^{l_{\rm d}}$ of all edges departing from $v$. 

Suppose that ${\sqcup}_{j=1}^{l_{\rm e}} F_{e_{{\rm d},v,j}}$ contains no connected component diffeomorphic to $K^2$. In this case, for the surface ${\sqcup}_{j=1}^{l_{\rm e}} F_{e_{{\rm e},v,j}}$ we embed into ${\sqcup}_{j=1}^{l_{\rm e}} (F_{e_{{\rm e},v,j}}) \times \{t \mid 0 \leq t \leq 1\}$ by the map mapping $x$ to $(x,0)$ first, and attach all $2$-handles corresponding to the critical points of index $2$ for the local Morse function disjointly, to ${\sqcup}_{j=1}^{l_{\rm e}} (F_{e_{{\rm e},v,j}}) \times \{1\}$. The surface ${\sqcup}_{j=1}^{l_{\rm e}} (F_{e_{{\rm e},v,j}}) \times \{1\}$ is changed into one represented as the disjoint union of finitely many copies of surfaces each of which is diffeomorphic to either $S^2$ or $S^1 \times S^1$ and by attaching all $1$-handles corresponding to the critical points of index $1$ for the local Morse function disjointly, we have a surface diffeomorphic to ${\sqcup}_{j=1}^{l_{\rm d}} F_{e_{{\rm d},v,j}}$ and a union of connected components of the boundary of the local $3$-dimensional compact and connected manifold.
By attaching these $2$-handles one after another and after that attaching these $1$-handles one after another, we have a local simple Morse function on the local $3$-dimensional compact and connected manifold such that the regular level sets consist of connected components diffeomorphic to $S^2$, $S^1 \times S^1$, or $K^2$. 

In the case where ${\sqcup}_{j=1}^{l_{\rm e}} F_{e_{{\rm e},v,j}}$ contains no connected component diffeomorphic to $K^2$, we can discuss in the same way. 
% consider the identification with ${\sqcup}_{j=1}^{l_{\rm e}} (F_{e_{{\rm e},v,j}}$  $F_{e_{v_2,2}}) \times \{0\}$ by the map mapping $x$ to $(x,0)$ and by attaching ${\sqcup}_{j=1}^{l_{\rm e}} (F_{e_{{\rm e},v,j}}$
In addition, the Reeb data of a simple Morse function with regular level sets consisting of surfaces diffeomorphic to $S^2$, $S^1 \times S^1$, or $K^2$, are as presented in FIGURE \ref{fig:4}. 
 \begin{figure}
	\includegraphics[width=70mm,height=65mm]{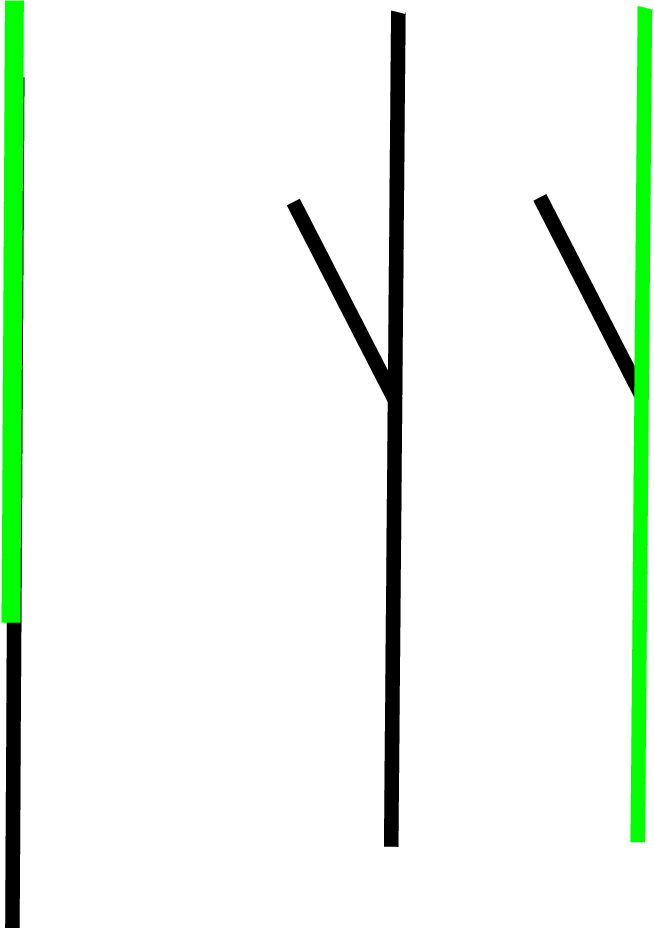}
	\caption{A local representation of the Reeb data of a simple Morse function with regular level sets consisting of surfaces diffeomorphic to $S^2$, $S^1 \times S^1$, or $K^2$. Note that for the black colored edges $e$, $F_e=S^2$ always holds, and that either the following holds in addition. For the two green colored edges $e$ here, $F_e=S^1 \times S^1$ always hold or $F_e=K^2$ always hold.}
	\label{fig:4}
\end{figure}
By fundamental arguments on handles in low ($2$- or $3$-)dimensional manifolds, we can deform the local simple Morse functions into new simple Morse functions as in FIGURE \ref{fig:5}, where their Reeb data are presented locally, only.
 \begin{figure}
	\includegraphics[width=70mm,height=65mm]{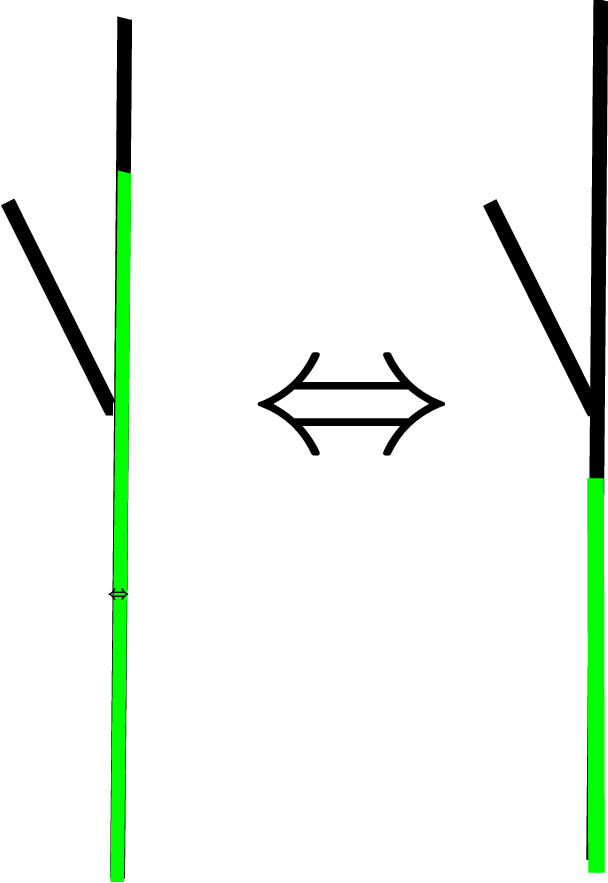}
	\caption{An important local deformation of simple Morse functions for FIGURE \ref{fig:4}.}
	\label{fig:5}
\end{figure}
We can see that the manifold $M$ is diffeomorphic to $S^3$ or represented as a connected sum of finitely many manifolds each of which is diffeomorphic to at least one of the following. 
Each of the manifolds is regarded as the manifold of the domain of one of the functions in Step 1 or the manifold of the domain of a simple Morse function such that the regular level sets are disjoint unions of spheres and that the the critical points of the function are of index $1$ or $m-1$ with $m=3$. Note that some result of \cite{saeki3} and related arguments, especially, \cite[Corollary 3.14 and Theorem 6.5]{saeki3} (or our Theorem \ref{thm:2}) and related arguments, are also essential.
\begin{itemize}
\item  $S^1 \times S^2$.
\item  Lens spaces.
\item A non-orientable manifold of degree $1$, which may be diffeomorphic to $S^1 \tilde{\times } S^2$, the non-trivial smooth bundle over $S^1$ whose fiber is diffeomorphic to $S^2$.
\end{itemize}
This completes the proof.
\end{proof}
Remark \ref{rem:1} is related to the proof above and important in the proof of Theorem \ref{thm:4} (\ref{thm:4.2}), presented later.
\begin{Rem}
\label{rem:1}
Related to Theorem \ref{thm:4} (\ref{thm:4.1}), we can have a Morse function $f_{\{v_j\}_{j=1}^5,\{e_j\}_{j=1}^4,\{F_{e_j}\}_{j=1}^4}:M:={{\mathbb{R}}P}^2 \times S^1 \rightarrow \mathbb{R}$ such that the Reeb digraph is a path digraph on $5$ vertices $v_j$ ($j=1,2,3,4,5$) with the edge set $\{e_1,e_2,e_3,e_4\}$ each element $e_j$ of which is oriented as a directed edge departing from $v_j$ and entering $v_{j+1}$ and with $F_{e_j}=S^2$ for $j=1,4$ and $F_{e_j}=K^2$ ($j=2,3$). Furthermore, by considering handles, we can have the function in such a way that the preimage ${q_{f_{\{v_j\}_{j=1}^5,\{e_j\}_{j=1}^4,\{F_{e_j}\}_{j=1}^4}}}^{-1}(v_i)$ contains exactly one critical point (two critical points) of $f_{\{v_j\}_{j=1}^5,\{e_j\}_{j=1}^4,\{F_{e_j}\}_{j=1}^4}$ for $i=1,2,4,5$ (resp. $i=3$). Note that in the case $j=3$, the two critical points of the function mapped to $v_3$ are of index $1$ and $2$, respectively. 
The 2nd Stiefel-Whitney class of ${\mathbb{R}P}^2$ does not vanish and from this, that of the manifold $M$ does not vanish. This function can be deformed to a simple Morse function as presented in \cite[Example 6 and FIGURE 1]{kitazawa2}. We avoid such regular level sets, containing surfaces diffeomorphic to ${\mathbb{R}P}^2$, in explicit and general situations of the present paper.
\end{Rem}
\begin{proof}[A proof of Theorem \ref{thm:4} (\ref{thm:4.2})]
"The "If" part" is same as "A proof of Theorem \ref{thm:4} (\ref{thm:4.1})".
From exposition in Remark \ref{rem:1}, as in "A proof of Theorem \ref{thm:4} (\ref{thm:4.1})", the given Morse function is deformed to a simple Morse function (locally and globally, similarly). This completes the proof.
\end{proof}
\section{Some classifications of Morse functions up to isomorphisms of ($m-1$)-labeled-digraphs with $m=2,3$.}
Classifying of Morse functions of certain classes is fundamental and important. It is natural to consider classifications up to {\it topological equivalence} and {\it $C^{\infty}$ equivalence}. In short, we consider the equivalence relation in the following for two smooth functions $c_1:X_1 \rightarrow \mathbb{R}$ and  $c_2:X_1 \rightarrow \mathbb{R}$: they are equivalent up to {\it topological} (resp. {\it $C^{\infty}$}) {\it equivalence} if there exists a pair $({\phi}_X:X_1 \rightarrow X_2,{\phi}_{\mathbb{R}}:\mathbb{R} \rightarrow \mathbb{R})$ of homeomorphisms (resp. {\it diffeomorphisms}) satisfying the relation ${\phi}_{\mathbb{R}} \circ f_1=f_2 \circ {\phi}_X$.  Such classifications have been done for simple Morse(-Bott) functions on closed surfaces, as shown in \cite{kulinich, lychakprishlyak, martinezalfaromezasarmientooliveira} for example. In short, they are classified essentially by the Reeb data, where such terminologies are used first by the author.

Classifications of functions up to isomorphisms of ($m-1$)-labeled-digraphs date back to Sharko's pioneering study \cite{sharko}. Sharko has considered reconstructing nice smooth functions with given Reeb digraphs on closed surfaces. He has constructed such functions which are locally Morse or represented by a certain elementary polynomial at critical points, being isolated in the case.  In \cite{masumotosaeki}, Masumoto and Saeki have extended this to arbitrary finite (di)graphs and constructed smooth functions on closed surfaces whose critical sets may not be isolated. Later, in \cite{michalak1}, Michalak has considered reconstruction of Morse functions such that level sets containing no critical point are spheres for digraphs as in Theorem \ref{thm:3}. In addition, Michalak has also presented the following theorem.
\begin{Thm}
\label{thm:5}
Suppose that a $1$-labeled-pre-digraph $(G,\{S^1\}_{e \in E_G})$ is given and that the following are satisfied.
\begin{itemize}
\item The $1$st Betti number of the graph $G$ is $a \geq 0$, where the orientation is forgotten. The graph $G$ has at least two edges.
\item The number of vertices of degree $2$ of $G$ is $b$.
\end{itemize}
For any closed, connected and orientable surface $M$ of genus $g \geq a+b$, we have a Morse function $f:M \rightarrow \mathbb{R}$ whose Reeb data are isomorphic to $(G,\{S^1\}_{e \in E_G})$. 
In addition, suppose that a closed, connected and orientable surface $M_0$ admits a Morse function $f_0:M_0 \rightarrow \mathbb{R}$ whose Reeb data are isomorphic to $(G,\{S^1\}_{e \in E_G})$, then $M_0$ must be of genus at least $a+b$. 
\end{Thm}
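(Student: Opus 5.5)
We read the statement as having two halves: a construction for every genus $g \geq a+b$, and a lower bound $g \geq a+b$ for any realization. Both halves rest on the handle correspondence recalled before Theorem \ref{thm:4}. For a Morse function on a closed surface with Reeb data $(G,\{S^1\}_{e \in E_G})$, every regular level set is a disjoint union of circles. The piece of $M$ over a small neighbourhood of a vertex $v$ is then a compact connected surface. Its boundary circles are in bijection with the edges at $v$, so it is a connected sum of some number $g_v$ of tori with $\deg v$ disks removed.

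\emph{Lower bound.} We first use the genus formula coming from the graph decomposition:
\[
g(M)=a+\sum_{v} g_v .
\]
We justify it by cutting $M$ along one regular circle in the interior of each edge. Each edge contributes a cylinder. Each vertex piece is as above. Gluing the pieces along the tree part of $G$ gives genus $\sum_v g_v$, and each of the $a$ independent cycles of $G$ adds one handle. Equivalently, an Euler characteristic count gives
\[
\chi(M)=\sum_v\bigl(2-2g_v-\deg v\bigr)=2V-2E-2\sum_v g_v ,
\]
together with $E-V=a-1$.

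Next we show that $g_v\geq 1$ whenever $\deg v=2$. If instead $g_v=0$, the piece over $v$ is an annulus carrying a Morse function whose level sets near its two boundary circles lie on the two different sides of the critical value. Those two circles then correspond to one edge entering $v$ and one edge departing from $v$. Moreover, there must be critical points in the piece, since $v$ is a vertex. Counting the index $0$, $1$ and $2$ critical points $c_0$, $c_1$, $c_2$ in this piece and using $\chi(\text{annulus})=0$, we get $c_1=c_0+c_2$. All critical points lie in a single level set and the level sets near the boundary are connected. From this we want to conclude that the Reeb graph would not acquire a vertex at $v$, a contradiction.

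\emph{Main obstacle.} The difficult step is precisely this exclusion of $g_v=0$ at degree-$2$ vertices. The Euler characteristic count alone allows a level set containing both an index $1$ and an index $0$ point. So we will argue via the connectedness of the level set $q_f^{-1}(v)$ together with the degree condition. Index $0$ and $2$ points are isolated components of a level set, so they cannot lie in a connected $q_f^{-1}(v)$ that also has other points. Hence $q_f^{-1}(v)$ contains only saddles. For a planar piece with two boundary circles the saddle count is then $c_1=0$, which contradicts $v$ being a vertex. Combining these facts gives $g\geq a+\sum_v g_v\geq a+b$.

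\emph{Construction.} This uses Michalak's realization \cite{michalak1} of $G$ by a Morse function with sphere regular level sets as the template, now in dimension $2$. Over each vertex $v$ of degree $\geq 3$, or of degree $1$ (sources and sinks, where we take the local model $\pm(x_1^2+x_2^2)$), we take a planar surface piece with $\deg v$ boundary circles. Over each degree-$2$ vertex we take a torus with two disks removed, realized by two index-$1$ critical points at the same value. We then glue cylinders along the edges. This gives genus exactly $a+b$ by the formula above.

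To reach a larger genus $g>a+b$, we replace the piece at one vertex of degree $\geq 2$ with a higher genus piece. Such a vertex exists because $G$ has at least two edges. The replacement adds $g-a-b$ handles, with all of the corresponding saddles on the same critical level, which keeps the level set over $v$ connected. We expect that the hypothesis of at least two edges is exactly what guarantees such a vertex exists.
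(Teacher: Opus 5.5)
The paper does not prove this statement; it quotes it from Michalak \cite{michalak1}, so there is no argument in the text to compare yours with. Judged on its own, your proof is correct and takes the natural route. You cut $M$ into vertex pieces $P_v$ and edge cylinders, and $\chi(M)=\sum_v\chi(P_v)$ then gives $g(M)=a+\sum_v g_v$ exactly. Your lower bound is complete once you use the observation you eventually reach; the first attempt via $c_1=c_0+c_2$ can be dropped. A nondegenerate extremum is an isolated point of its level set, so $q_f^{-1}(v)$ contains only saddles when $\deg v=2$. Hence $2g_v=-\chi(P_v)=c_1\geq 1$, so $g_v\geq 1$.

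In the construction, the one step you assert without justification is the existence of the local models. You need a Morse function on the orientable genus-$g_v$ piece with $p\geq 1$ lower and $q\geq 1$ upper boundary circles, whose $p+q-2+2g_v$ saddles all lie in a single connected level. Attach that many untwisted bands simultaneously to the $p$ lower circles, in three groups:
\begin{enumerate}
\item $p-1$ bands joining the circles;
\item $q-1$ bands, each with both feet on a single current top circle, so that each one splits it;
\item $g_v$ interleaved pairs placed in small arcs, which add genus without changing the number of top circles.
\end{enumerate}
The critical level is obtained from the lower circles by identifying, for each band, one point of one foot with one point of the other. It is therefore connected, because of the joining bands. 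For the gluing, you should also say two things. First, take the critical value at $v$ to be $c_G(v)$. Second, choose the cylinder gluings orientation-compatibly, so that the glued surface is orientable and its Reeb digraph is $G$ with the given order. Your observation that a connected $G$ with at least two edges has a vertex of degree at least $2$ is right. That hypothesis is genuinely needed, since a single edge is realized only on $S^2$.
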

Note that the non-orientable case has been shown, where we omit. Later, Gelbukh has shown the Morse-Bott case in \cite{gelbukh1, gelbukh2}, where we omit.

Our new related result is Theorem \ref{thm:6}.
\begin{Thm}
\label{thm:6}
Suppose that a $2$-labeled-pre-digraph $(G,\{F_e\}_{e \in E_G})$ is given and that the following are satisfied.
\begin{itemize}
\item $F_e=S^2,S^1 \times S^1,K^2$.
\item The $1$st Betti number of the graph $G$ is $a_0 \geq 0$, where the orientation is forgotten. The graph $G$ has at least two edges.
\item The number of vertices $v$ of degree $2$ incident to exactly two edges $e_{v,1}$ and $e_{v,2}$ with $F_{e_{v,1}}$ and $F_{e_{v,2}}$ being spheres is $n_{(G,\{F_e\}_{e \in E_G}),S^2,S^2} \geq 0$.
\item For each vertex $v$, either an edge $e_{{\rm  e},v}$ entering $v$ with ${q_f}^{-1}(p_e)$ diffeomorphic to $K^2$ or an edge $e_{{\rm  d},v}$ departing from $v$ with ${q_f}^{-1}(p_e)$ diffeomorphic to $K^2$, exists, as in Theorem \ref{thm:4} {\rm (}\ref{thm:4.1}{\rm )}.
\item The number of edges $e$ with $F_{e}=S^1 \times S^1$ is $n_{(G,\{F_e\}_{e \in E_G}),S^1 \times S^1} \geq 0$ and the number of edges $e$ with $F_{e}=K^2$ is $n_{(G,\{F_e\}_{e \in E_G}),K^2} \geq 0$.
\end{itemize}
Given a $3$-dimensional closed, connected and orientable manifold $M$ diffeomorphic to a connected sum of the form $({\sharp}_{j=1}^{a_1} (S^1 \times S^2)) {\sharp} ({\sharp}_{j=1}^{a_2} (S^1 \tilde{\times} S^2)) \sharp ({\sharp}_{j=1}^{b} L_j) \sharp ({\sharp}_{j=1}^{n_{(G,\{F_e\}_{e \in E_G}),K^2}} K_j)$, where the notation are as follows{\rm :} $a_1 \geq 0$ and $a_2 \geq 0$ are integers satisfying $a_0+n_{(G,\{F_e\}_{e \in E_G}),S^2,S^2} \leq a_1+a_2$, $b$is an integer satisfying $b \leq n_{(G,\{F_e\}_{e \in E_G}),S^1 \times S^1}$, and each manifold $L_j$ and $K_j$ are a lens space and a non-orientable manifold of degree $1$ which may be diffeomorphic to $S^1 \tilde{\times} S^2$, respectively. For this, we have a Morse function $f:M \rightarrow \mathbb{R}$ whose Reeb data are isomorphic to  $(G,\{F_e\}_{e \in E_G})$. 
%In addition, suppose that a closed, connected and orientable surface $M_0$ admits a Morse function $f_0:M_0 \rightarrow \mathbb{R}$ whose Reeb data are isomorphic to $(G,\{F_e\}_{e \in E_G})$.

In addition, suppose that a closed, connected and orientable manifold $M_0$ admits a Morse function $f_0:M_0 \rightarrow \mathbb{R}$ whose Reeb data are isomorphic to  $(G,\{F_e\}_{e \in E_G})$. In this situation, $M_0$ must be of such manifolds, respecting these defined numbers, conversely. 

\end{Thm}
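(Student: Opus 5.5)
The plan has two halves: build a Morse function realizing the given Reeb data, then read off the topology of any manifold that realizes it.

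\emph{Realization.} I would first take a Morse function $f_G$ on $S^3$ whose regular level sets are spheres and whose Reeb digraph is isomorphic to $G$, following Michalak's reconstruction \cite{michalak1}. In the spirit of Theorem \ref{thm:5}, each independent cycle of $G$ and each degree-$2$ vertex between two sphere edges costs one $S^1\times S^2$ or $S^1\tilde{\times}S^2$ summand. These are produced by the Case 2-2 handle picture (FIGURE \ref{fig:3}), run in reverse, and this is why the hypothesis needs $a_0+n_{(G,\{F_e\}_{e \in E_G}),S^2,S^2}\le a_1+a_2$. Any extra $S^1\times S^2$ or $S^1\tilde{\times}S^2$ summands can be inserted inside a single edge by the connected-sum operation of FIGURE \ref{fig:2}, without changing the labeled digraph; I would check this last point carefully. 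I would then relabel the edges with $F_e=S^1\times S^1$ or $K^2$. For each such edge, take a small segment and replace the sphere level sets there by the local model of Step 1 in the proof of Theorem \ref{thm:4} (\ref{thm:4.1}): a $1$-handle turns $S^2$ into a torus or Klein bottle, and a $2$-handle closes it up again. A torus edge then carries either a lens space summand $L_j$ or an $S^1\times S^2$, and this is exactly why only $b\le n_{(G,\{F_e\}_{e \in E_G}),S^1\times S^1}$ is required. A $K^2$ edge carries one summand $K_j$. The last step is to absorb these handles into the adjacent vertices so that the edge really carries the surface $F_e$. For this I would use the deformations of FIGURE \ref{fig:4} and FIGURE \ref{fig:5}, together with the hypothesis that every vertex has an incident $K^2$ edge, much as that hypothesis is used in Theorem \ref{thm:4}.

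\emph{Converse.} Given $f_0$ on $M_0$ with the prescribed Reeb data, I would repeat Step 2 of the proof of Theorem \ref{thm:4} (\ref{thm:4.1}). The local analysis of Cases 2-1, 2-2 and 2-3 makes $f_0$ simple, and the local deformations of FIGURE \ref{fig:5} then split $M_0$ into a connected sum. I would count the summands as follows. Cycles of the Reeb graph and degree-$2$ sphere--sphere vertices contribute $S^1\times S^2$ or $S^1\tilde{\times}S^2$ summands, at least $a_0+n_{(G,\{F_e\}_{e \in E_G}),S^2,S^2}$ in total, by the argument behind Theorem \ref{thm:5}. Each torus edge contributes at most one lens space summand. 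Each $K^2$ edge contributes exactly one non-orientable degree-$1$ summand. For the counts I would cite \cite[Corollary 3.14 and Theorem 6.5]{saeki3} together with Theorem \ref{thm:2}.

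\emph{Main obstacle.} The hardest step is the claim that each $K^2$ edge contributes exactly one summand $K_j$, rather than several or none, and the dual claim in the realization that a summand $K_j$ can always be placed on a prescribed $K^2$ edge consistently with the vertex condition. I would first try to cut $M_0$ along a regular Klein-bottle level set on that edge. The two sides are compact $3$-manifolds bounded by $K^2$, and the plan is to identify each side as $S^1\tilde{\times}D^2$ union pieces from spheres, using the handle description. Uniqueness of the prime decomposition would then pin down the count. I also note that the statement asks for orientable $M$, while the summands $K_j$ are non-orientable, so the orientability assumption will have to be dropped or reinterpreted along the way.
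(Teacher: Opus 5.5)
Your realization step has a genuine gap, and it lies exactly where your route departs from the paper's. You take a Morse function $f_G$ with only spherical regular level sets and Reeb digraph isomorphic to $G$. Then \emph{every} degree-two vertex of $G$ is a sphere--sphere vertex of $f_G$, including those incident to an edge you later relabel by $S^1\times S^1$ or $K^2$. As in Case 2-2, at such a vertex the sphere is surgered along $k\ge 1$ circles and re-tubed by $k$ one-handles, so the vertex forces at least one extra $S^1\times S^2$ or $S^1\tilde{\times}S^2$ summand. Inserting the Step 1 models on edges and sliding their critical points into the vertices amounts to a connected sum followed by a deformation, so these summands survive. Your construction therefore only reaches $a_1+a_2\ge a_0+(\text{number of all degree-two vertices of } G)$, not $a_0+n_{(G,\{F_e\}_{e \in E_G}),S^2,S^2}$. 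For example, for the path digraph $S^2\to K^2\to S^2$ on $4$ vertices, Step 1 of the proof of Theorem \ref{thm:4} gives a single degree-one summand, while your method yields at least three $S^2$-bundle summands. Similarly, a torus edge must also be allowed to carry $S^3$ (two solid tori glued with meridians meeting once). Otherwise $b<n_{(G,\{F_e\}_{e \in E_G}),S^1\times S^1}$ cannot occur at the minimal value of $a_1+a_2$. The paper avoids this by building the local model at each Case 2-3 vertex directly: $2$-handles compress the entering tori and Klein bottles to spheres, $1$-handles join them, $2$-handles split, and $1$-handles create the departing tori and Klein bottles. Degree-two vertices meeting a non-sphere edge then carry no extra pair. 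You can repair your route by applying Michalak's reconstruction to the digraph obtained from $G$ by suppressing the degree-two vertices incident to an $S^1\times S^1$ or $K^2$ edge, and letting your inserted handles create those vertices.

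The converse plan also needs correcting. Cutting $M_0$ along a Klein-bottle level set does not produce ``two sides'' when the $K^2$ edge lies on a cycle of $G$, and you never say where the vertex hypothesis enters. Read literally, that hypothesis fails at every source and sink, since their only edge is labeled $S^2$. What Case 2-3 actually uses is that no vertex has both an entering and a departing $K^2$ edge. This forces, at the terminal vertex of a $K^2$ edge, the surgery on the Klein bottle to be along its two-sided non-separating curve. Surgery along the separating curve would give ${\mathbb{R}P}^2\sqcup{\mathbb{R}P}^2$, which is the phenomenon behind ${\mathbb{R}P}^2\times S^1$ in Remark \ref{rem:1}. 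The dual statement holds at the initial vertex, and it is this local argument, not a global cut, that produces the summand. Finally, uniqueness of the prime decomposition cannot single out the $K_j$. A Klein bottle has only one isotopy class of two-sided non-separating curves, so every non-orientable manifold of degree $1$ is $S^1\tilde{\times}S^2$. Moreover, in a non-orientable connected sum $S^1\times S^2$ and $S^1\tilde{\times}S^2$ are interchangeable. What you actually have to prove is a lower bound on the total number of $S^2$-bundle summands.
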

\begin{proof}
We first construct a desired Morse function on $M$ diffeomorphic to a connected sum of the form $({\sharp}_{j=1}^{a_1} (S^1 \times S^2)) {\sharp} ({\sharp}_{j=1}^{a_2} (S^1 \tilde{\times} S^2)) \sharp ({\sharp}_{j=1}^{b} L_j) \sharp ({\sharp}_{j=1}^{n_{(G,\{F_e\}_{e \in E_G}),K^2}} K_j)$.

It is essential to review Cases 2-1, 2-2, and 2-3 in "A proof of Theorem \ref{thm:4} (\ref{thm:4.1})". For Cases 2-1 and 2-2, the local Morse functions must be as presented there and for Case 2-2, we can make the 1st Betti number $l_{v_2}$ an arbitrary positive integer.
We present our desired construction for Case 2-3. We use an explicit fold map and compose it with the projection to the first component. This is presented in FIGUREs \ref{fig:6} and \ref{fig:7}.
We first construct a local fold map such that the restriction to its singular set is an embedding and that each singular point of the map is of degree $0$. 
We change suitably by adding connected components of the singular set of the new map to the previous map in such a way that each point of the new connected components is of degree $1$ and that the map is changed locally around the new connected components.

\begin{figure}
	\includegraphics[width=70mm,height=65mm]{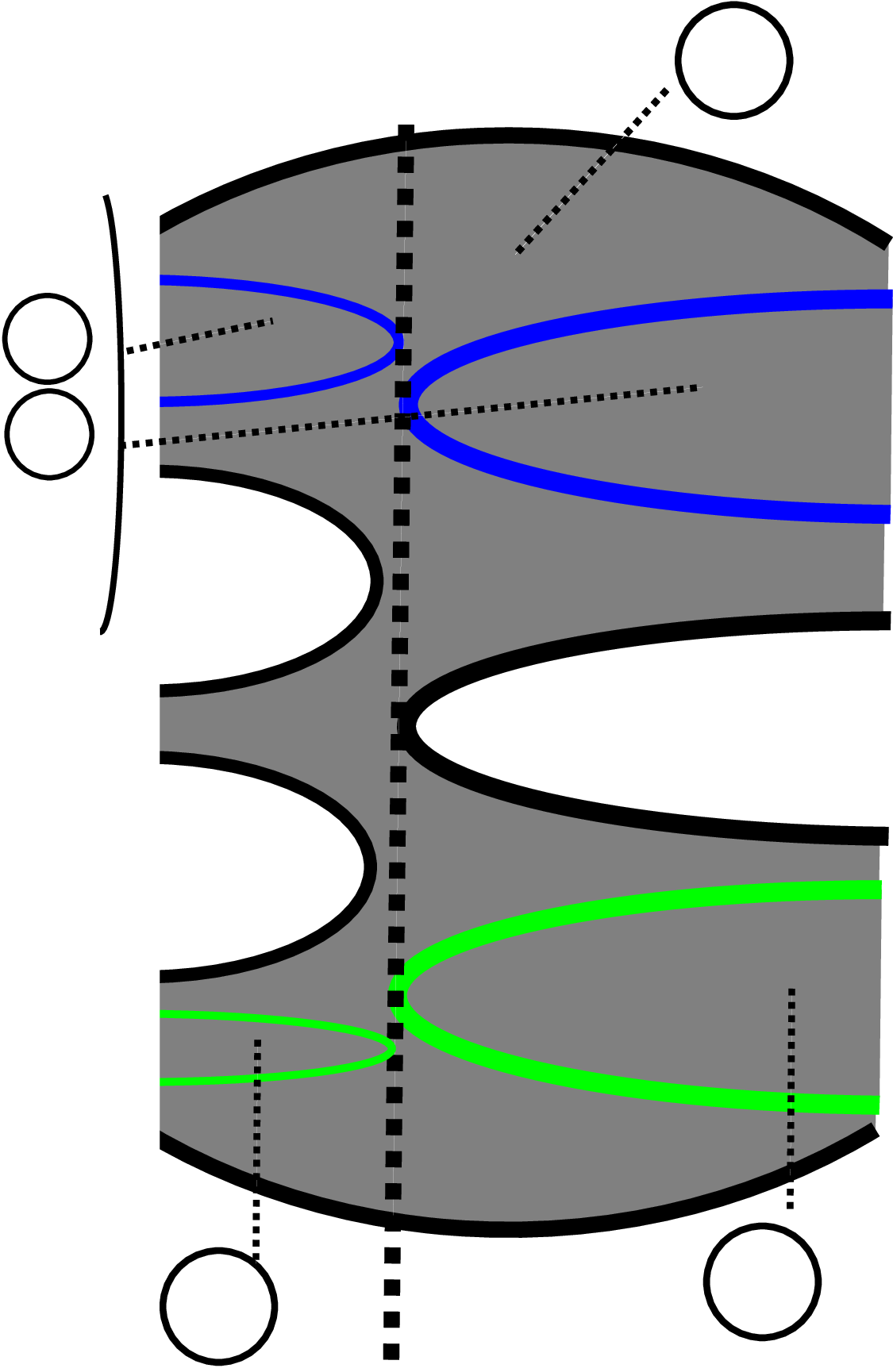}
	\caption{The image and the singular set of the local fold map into ${\mathbb{R}}^2$ and the preimages of some points by the local fold map respecting (a small neighborhood of) a vertex $v$ of Case 2-3. By composing the projection to the first component, we have a desired local Morse function. For the preimages of points by fold maps, see also \cite{saeki3}. There Saeki has established and explained so-called theory of ({\it singular}) {\it fibers}.}
	\label{fig:6}
\end{figure}
\begin{figure}
	\includegraphics[width=70mm,height=65mm]{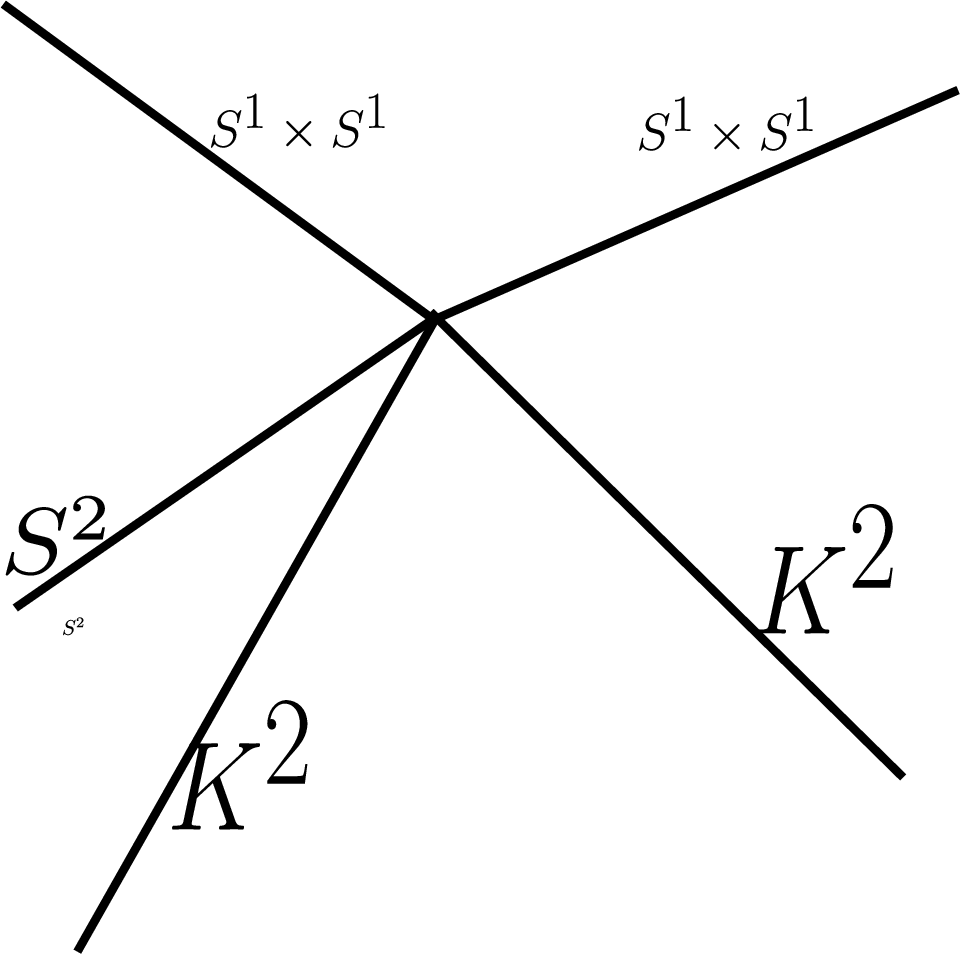}
	\caption{(Locally,) the Reeb data of the function which is, around the vertex $v$ and the preimage of the local fold map in FIGURE \ref{fig:6}, represented as the composition of the fold map with the canonical projection to the first component, is presented.}
	\label{fig:7}
\end{figure}
 See FIGUREs \ref{fig:6} and \ref{fig:7} again. For local structures of the fold maps, see \cite{saeki3} for example. There so-called theory of ({\it singular}) {\it fibers} is presented. A {\it singular fiber} of a smooth map means a ({\it germ} of) a smooth map around the preimage (by the map) containing some single point (of the map).
For this case, we can see that the local $3$-dimensional compact and connected manifold is obtained in the following way.

Suppose that ${\sqcup}_{j=1}^{l_{\rm e}} F_{e_{{\rm d},v,j}}$ contains no connected component diffeomorphic to $K^2$. In this case, for the surface ${\sqcup}_{j=1}^{l_{\rm e}} F_{e_{{\rm e},v,j}}$ we embed into ${\sqcup}_{j=1}^{l_{\rm e}} (F_{e_{{\rm e},v,j}}) \times \{t \mid 0 \leq t \leq 1\}$ by the map mapping $x$ to $(x,0)$ first. After that, first we attach $2$-handles corresponding to the critical points of index $2$ for the local Morse function disjointly to make each connected component of the surface ${\sqcup}_{j=1}^{l_{\rm e}} F_{e_{{\rm e},v,j}}$ to the sphere and second we attach $1$-handles corresponding to the critical points of index $1$ for the local Morse function disjointly to make the disjoint union of the spheres to a single sphere. After that, third, we attach $2$-handles corresponding to the critical points of index $2$ for the local Morse function disjointly to make a disjoint union of finitely many spheres and last, by attaching the remaining $1$-handles corresponding to the critical points of index $1$ for the local Morse function disjointly, the surface ${\sqcup}_{j=1}^{l_{\rm e}} (F_{e_{{\rm e},v,j}}) \times \{1\}$ is changed into one diffeomorphic to ${\sqcup}_{j=1}^{l_{\rm d}} F_{e_{{\rm d},v,j}}$ and a union of connected components of the boundary of the local $3$-dimensional compact and connected manifold. Corresponding to the handles, we can locally deform the function to a simple Morse function in such a way that the 1st Betti number does not increase locally. By the deformation as in FIGURE \ref{fig:5} and "A proof of Theorem \ref{thm:4} (\ref{thm:4.1})", we have a simple Morse function as in "A proof of Theorem \ref{thm:4} (\ref{thm:4.1})".
We can see that the manifold $M$ is a connected sum of the following manifolds.
\begin{itemize}
\item $a_1$ copies of $S^1 \times S^2$ and $a_2$ copies of $S^1 \tilde{\times} S^2$, where we use the integers $a_1 \geq 0$ and $a_2 \geq 0$ satisfying $a_0+n_{(G,\{F_e\}_{e \in E_G}),S^2,S^2} \leq a_1+a_2$. This is regarded as the $m$-dimensional manifold of the domain of a simple Morse function such that the regular level sets are disjoint unions of spheres and that the the critical points of the function are of index $1$ or $m-1$, where $m=3$.
For this, see \cite{saeki4} again. In addition, see also \cite{saekisuzuoka} and \cite{kitazawa01, kitazawa02, kitazawa03}, especially, \cite[Corollary 4.8]{saekisuzuoka} and \cite[Corollary 4]{kitazawa03}, which are on the isomorphisms between the fundamental groups of the manifolds of the domains and those of the Reeb graphs.
\item  At most $b \leq n_{(G,\{F_e\}_{e \in E_G}),S^1 \times S^1}$ lens spaces. This is for the manifolds $L_j$.
\item Exactly $n_{(G,\{F_e\}_{e \in E_G}),K^2}$ manifolds which are diffeomorphic to $S^1 \tilde{\times} S^2$ or more general non-orientable manifolds of degree $1$. This is for the manifolds $K_j$.

\end{itemize}
We can also see that the integers $a_1$, $a_2$ and  $b$ can be chosen in an arbitrary way under the constraint.
This completes the proof of the former part.

We explain the latter part, the topology (differentiable structure) of $M_0$. This can be understood by the present argument and the argument in "A proof of Theorem \ref{thm:4} (\ref{thm:4.1})".

This completes the proof.   
\end{proof}
Note that the case defined by the condition $F_e=S^2,S^1 \times S^1$ in Theorem \ref{thm:6} is a main theorem of \cite{kitazawa7} (\cite[Theorem 2]{kitazawa7}) and that the previous result is improved.
Note that related to this, contribution of the author to studies on nice smooth functions with given Reeb graphs is important. For example, the author has pioneered studies on reconstruction of nice smooth functions whose Reeb digraphs are isomorphic to given digraphs and whose level sets are given (($m-1$)-dimensional) smooth closed manifolds and presented the article \cite{kitazawa0}, followed by the author himself in {kitazawa1, kitazawa2} and the preprint \cite{kitazawa6}. We do not assume related knowledge or understanding. Note also that the notion of Reeb data is defined first in the present paper, formally.
\begin{Prob}
	\label{prob:2}
Can we consider similar theorems in the following case, for example.
\begin{enumerate}
\item The case of Theorem \ref{thm:2}. In the preprint \cite{kitazawa8}, partial or explicit investigations are presented.
\item The case of Theorem \ref{thm:4} (\ref{thm:4.2}). With a little effort, we may have an answer.
\end{enumerate}
\end{Prob}

 \section{Conflict of interest and Data availability.}
  \noindent {\bf Conflict of interest.} \\
 The author is a researcher at Osaka Central Advanced Mathematical Institute, or a member of OCAMI researchers. This is supported by MEXT Promotion of Distinctive Joint Research Center Program JPMXP0723833165. He thanks this opportunity. He is not employed there. \\
Related to this, the author has given talks related to \cite{kitazawa7, kitazawa8}, in the conference "Mathematical Theory of Knots VIII (https://sites.google.com/view/knots2025nuchs)"
 and "The 2026 Annual Meeting of The Mathematical Society of Japan (The Topology Division)", supported by several projects. For example, for the latter conference, the author has been supported by JSPS KAKENHI Grant Number JP23H05437 (Principal investigator: Osamu Saeki). The author would like to thank people related to them for supports and opportunities. \\
  %Some of works by other researchers and this version may overlap in some of the contents due to the nature that our problems are natural in theory of Morse functions and applications to differential topology and that related mathematical studies are very fundamental and classical in some senses, for example. However the present version of our paper is presented independent of these work. \\
  %Saga Souhatsu Mathematical Seminar (http://inasa.ms.saga-u.ac.jp/Japanese/saga-souhatsu.html), inviting the author as a speaker, is funded and supported by JST Fusion Oriented REsearch for disruptive Science and Technology JPMJFR202U: the author was a speaker on 2024/7/12 supported by this project.\\
  \ \\
  {\bf Data availability.} \\
 No other data is generated. We do not assume non-trivial arguments in preprints formally unpublished. Referring to these preprints to some extent is no problem.

\end{document}